\def\a{{\alpha}}
\newtheorem{thm}{Theorem} 
\newtheorem{lem}{Lemma} 
\newcommand{\figlab}[1]{\label{fig:#1}}
\newcommand{\lemlab}[1]{\label{lem:#1}}
\newcommand{\thmlab}[1]{\label{thm:#1}}
\newcommand{\figref}[1]{\ref{fig:#1}}
\newcommand{\lemref}[1]{\ref{lem:#1}}
\newcommand{\thmref}[1]{\ref{thm:#1}}
\newcommand{\hide}[1]{}
\title{Every Tetrahedron has a\\ 
$3$-vertex Quasigeodesic}
\author{Joseph O'Rourke} 
\date{\today}
\begin{document}
\maketitle

\begin{abstract}
We prove that every tetrahedron $T$ has a simple, closed
quasigeodesic that passes through three vertices of $T$.
Equivalently, every $T$ has a face whose ``exterior angles"
are at most $\pi$.
\end{abstract}

\section{Introduction}
A \emph{quasigeodesic} $Q$ is a curve on the surface of a polyhedron
that is convex to both sides in the sense that each point $p \in Q$ has surface
angle $\le \pi$ to each side.
In a vertex-free region, a quasigeodesic is a geodesic
with exactly $\pi$ to each side, but unlike geodesics,
a quasigeodesic can pass through a vertex.
Of particular interest are the simple (non-self-intesecting) closed quasigeodesics,
which we henceforth abbreviate to ``quasigeodesic" without qualifiers.
The main result of this note is that every tetrahedron $T$ has a quasigeodesic $Q$
passing through three vertices. So $Q$ is the boundary $\partial F$ of a face $F$ of $T$.
This is of interest because Pogorelov~\cite{p-qglcs-49} proved that
every convex polyhedron has at least three quasigeodesics,
a generalization of the $3$-geodesics theorem of Lysternick and Schnirlemanm.\footnote{
\url{wikipedia.org/wiki/Theorem_of_the_three_geodesics}.
See also~\cite[p.~374]{do-gfalop-07}.}
So our main theorem (Theorem~\thmref{Q3}) identifies at least one of the three guaranteed quasigeodesics on tetrahedra.

Theorem~\thmref{Q3} may also be stated without reference to the notion of a
quasigeodesic:
Every tetrahedron $T$ has a face $F$ such that the angles 
to the other side of the vertices of $F$ is at most $\pi$.
By the ``other side" is meant: the two incident face angles not in $F$,
in some sense the ``exterior angles" of $F$.
This is a fundamental 
relation among the $12$ face angles of any tetrahedron.
It could have been known since antiquity, but perhaps was of no interest
without the notion of a quasigeodesic.

\section{Notation}
The detailed argument concerning the $12$ angles requires precise notation.
\begin{figure}[htbp]
\centering
\includegraphics[width=1.0\linewidth]{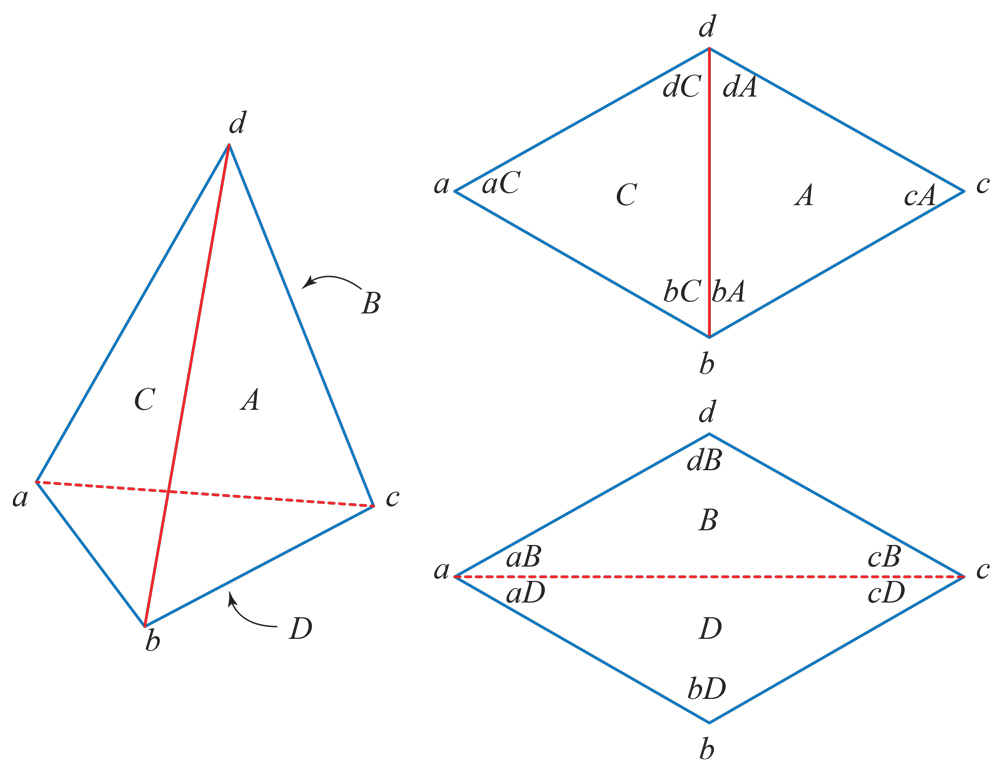}
\caption{$A=bdc$, $B=cda$, $C=adb$, $D=abc$.}
\figlab{NotationAngles}
\end{figure}

\begin{itemize}
\item Vertices of tetrahedron $T$: $a,b,c,d$.
\item Face $A$ is opposite $a$, and so does not include $a$; etc. 
\item So: $A=bdc$, $B=cda$, $C=adb$, $D=abc$. 
\item Face angles are specified by vertex and face.
So the three face angles incident to vertex $a$ are: $aB, aC, aD$; etc.
So $A$, which is opposite $a$, is not incident to $a$; etc.
See Fig.~\figref{NotationAngles}.
\item Vertex curvature: $\omega(a) = 2 \pi - (aB +  aC + aD)$.
\end{itemize}

\noindent
To simplify the calculations, angles will be represented 
in inequalities in units of $\pi$:
$1 \equiv \pi$, $2 \equiv 2\pi$, etc.
Thus under this convention, each of the $12$ face angles of a tetrahedron lies in $(0,1)$.

\section{Lemmas}
We establish two preliminary lemmas that will be used in the proof.

\begin{lem}
\lemlab{trineq}
Let $\a_1,\a_2,\a_3$ be the face angles incident to vertex $v$
of a tetrahedron $T$.
Then the angles satisfy the triangle inequality: 
$\a_1 < \a_2 + \a_3$, and similarly
$\a_2 < \a_1 + \a_3$, and $\a_3 < \a_1 + \a_2$.
The inequalities are strict unless $T$ is flat.
\end{lem}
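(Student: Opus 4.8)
The plan is to recognize Lemma~\lemref{trineq} as the classical \emph{trihedral angle inequality} (Euclid, \emph{Elements}, Book~XI, Prop.~20): the three face angles incident to a vertex of a convex cone are the side lengths of a spherical triangle and hence obey the triangle inequality. Concretely, I would fix the vertex $v$ with its three neighbors $x,y,z$ and let $e_x,e_y,e_z$ be the unit vectors from $v$ along the edges $vx,vy,vz$. The three face angles incident to $v$ are exactly the three pairwise angles $\angle(e_y,e_z),\ \angle(e_z,e_x),\ \angle(e_x,e_y)$, each in $(0,\pi)$; call them $\a_1,\a_2,\a_3$ respectively. When $T$ is non-flat the points $v,x,y,z$ are affinely independent, so $e_x,e_y,e_z$ are linearly independent and their endpoints on $S^2$ are the vertices of a non-degenerate spherical triangle with side lengths $\a_1,\a_2,\a_3$.

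It then remains to prove the spherical triangle inequality, say $\a_2\le\a_1+\a_3$ (the other two follow by relabeling). I would do this with one line of spherical trigonometry: placing $e_y$ at the north pole, $e_x$ and $e_z$ have colatitudes $\a_3$ and $\a_1$, and if $\phi$ denotes the difference of their longitudes then
\[
\cos\a_2 \;=\; \cos\a_3\cos\a_1+\sin\a_3\sin\a_1\cos\phi \;\ge\; \cos\a_3\cos\a_1-\sin\a_3\sin\a_1 \;=\; \cos(\a_1+\a_3),
\]
using $\cos\phi\ge-1$ and $\sin\a_1,\sin\a_3>0$. If $\a_1+\a_3\ge\pi$ then trivially $\a_2<\pi\le\a_1+\a_3$; otherwise $\a_1+\a_3\in(0,\pi)$ and $\a_2\in(0,\pi)$, and since $\cos$ is strictly decreasing on $[0,\pi]$ the display yields $\a_2\le\a_1+\a_3$. (An alternative is Euclid's purely synthetic SAS argument, which gives the statement and its equality case directly, but the trigonometric version is shorter.)

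For strictness: equality in the display forces $\cos\phi=-1$, i.e.\ $e_x,e_y,e_z$ coplanar, contradicting linear independence; so the inequality is strict precisely when $T$ is non-flat, degenerating to a possible equality only in the flat case, as claimed. I do not expect a real obstacle here, since the lemma is classical; the only points needing a little care are (i) the easy case $\a_1+\a_3\ge\pi$, where the bound holds for trivial reasons and must be dispatched separately, and (ii) identifying the equality case of the spherical triangle inequality with coplanarity of the three edge directions, hence with flatness of $T$.
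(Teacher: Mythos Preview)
Your proposal is correct and follows essentially the same route as the paper: intersect the three faces at $v$ with a small sphere to obtain a spherical triangle whose side lengths are $\a_1,\a_2,\a_3$, and then invoke the triangle inequality on the sphere, with strictness coming from non-degeneracy (non-flatness of $T$). The only difference is one of detail: the paper simply cites ``the triangle inequality holds on $S$'' as a known fact, whereas you supply a short proof of the spherical triangle inequality via the spherical law of cosines and handle the equality case explicitly.
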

\begin{proof}
Surround $v$ with a sphere $S$ centered on $v$. Then the planes containing the
faces incident to $v$ cut $S$ in great-circle geodesics, forming a spherical triangle on $S$.
The length of each geodesic arc is the measure of the corresponding face angle 
$\a_i$ incident to $v$. 
The triangle inequality holds on $S$, so $\a_1 < \a_2 + \a_3$,
strictly less than because the triangle cannot degenerate to a geodesic segment
(unless the tetrahedron is flat).

In the flat case, $\a_1 = \a_2 + \a_3$.
\end{proof}
\noindent
This lemma holds at any degree-$3$ vertex of a convex polyhedron.

We say that ``\emph{face $F$ fails at vertex $v$}" if the two angles incident to $v$ not in $F$
exceed $\pi$.
So, for face $A$ to fail on vertex $b$,
then among the three
face angles $bA,bC,bD$ incident to $b$, the two angles not in $A$
satisfy $bC + bD > 1$. 
This means that $\partial A$ is not a quasigeodesic, 
because to one side---the other side from $bA$---the angle exceeds $\pi$.

\paragraph{Example.}
Fig.~\figref{OneQ3} shows a tetrahedron with 
$\partial B$ a quasigeodesic, but none of
the other face boundaries is a quasigeodesic.
Its vertex coordinates are:
\begin{equation*}
a,b,c,d =
(0,0,0), \; (1,0,0), \;  (4.91,3.24,0), \; (-3.54,1.98,4.58) \;.
\end{equation*}
For example, face $B$ does not fail at vertex $a$:
$aC + aD= 125^\circ + 33^\circ = 159^\circ < \pi$.
Face $A$ fails at vertex $b$: $bC + bD = 48^\circ + 140^\circ = 188^\circ > \pi$.
\begin{figure}[htbp]
\centering
\includegraphics[width=1.0\linewidth]{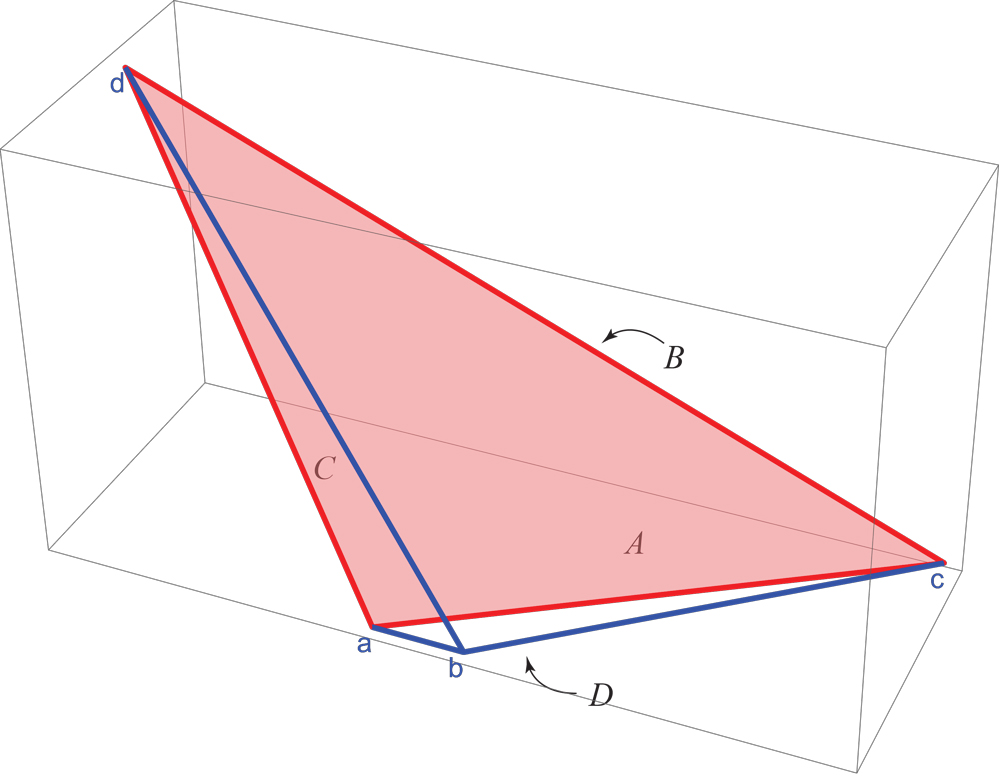}
\caption{The (red) boundary of shaded face $B=cda$ is a quasigeodesic,
but none of $\partial A, \partial C, \partial D$ are quasigeodesics.}
\figlab{OneQ3}
\end{figure}

\begin{lem}
\lemlab{curvless1}
If a face $A$ fails at a vertex $b$, then $\omega(b)<1$.
\end{lem}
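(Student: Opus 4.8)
The plan is that this lemma is immediate from the definitions, and no machinery (not even Lemma~\lemref{trineq}) is needed. Recall that the three face angles incident to $b$ are $bA$, $bC$, and $bD$, and that by definition $\o(b) = 2 - (bA + bC + bD)$ in the paper's units where $1 \equiv \pi$.

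First I would record what ``$A$ fails at $b$'' supplies: the two face angles at $b$ not lying in $A$ are precisely $bC$ and $bD$, and failure means $bC + bD > 1$. Next I would note that $bA > 0$, since $bA$ is an honest interior angle of the triangular face $A = bdc$ and $T$ is a genuine (non-flat) tetrahedron. Adding these, the total face-angle sum at $b$ satisfies $bA + bC + bD > bC + bD > 1$. Finally, substituting into the curvature formula, $\o(b) = 2 - (bA + bC + bD) < 2 - 1 = 1$, which is the claim.

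There is essentially no obstacle here: once the notation is unwound the conclusion is a one-line chain of inequalities. (If one only wanted $\o(b) \le 1$, even $bA \ge 0$ would do; the strict inequality uses $bA > 0$, i.e. that the face $A$ is a nondegenerate triangle, which holds for any actual tetrahedron.)
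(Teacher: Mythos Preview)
Your proof is correct and essentially identical to the paper's: both unpack the failure condition $bC + bD > 1$, substitute into $\o(b) = 2 - (bA + bC + bD)$, and use positivity of $bA$ to obtain $\o(b) < 1$. One tiny aside: your parenthetical slightly overstates the role of $bA > 0$, since the \emph{strict} failure inequality $bC + bD > 1$ already gives $\o(b) < 1$ even with merely $bA \ge 0$; but this does not affect your main argument.
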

\begin{proof}
Since face $A$ fails at $b$, by definition, $bC + bD > 1$.
Therefore 
\begin{eqnarray*}
\omega(b) &=& 2-(bA+bC+bD)\\
\omega(b) &=& 2-(bC+bD) - bA\\
\omega(b) &<& 1 - bA\\
\omega(b) &<& 1
\end{eqnarray*}
This establishes the claim of the lemma.
\end{proof}

\section{Case Analysis}
We now undertake a case analysis to show that it is not possible for all
four faces of tetrahedron $T$ to fail at vertices.
The cases, illustrated in Fig.~\figref{Cases}, distinguish first the number
of distinct vertices among the four face-failures, and second, the pattern
of the failures.
\begin{figure}[htbp]
\centering
\includegraphics[width=0.8\textheight]{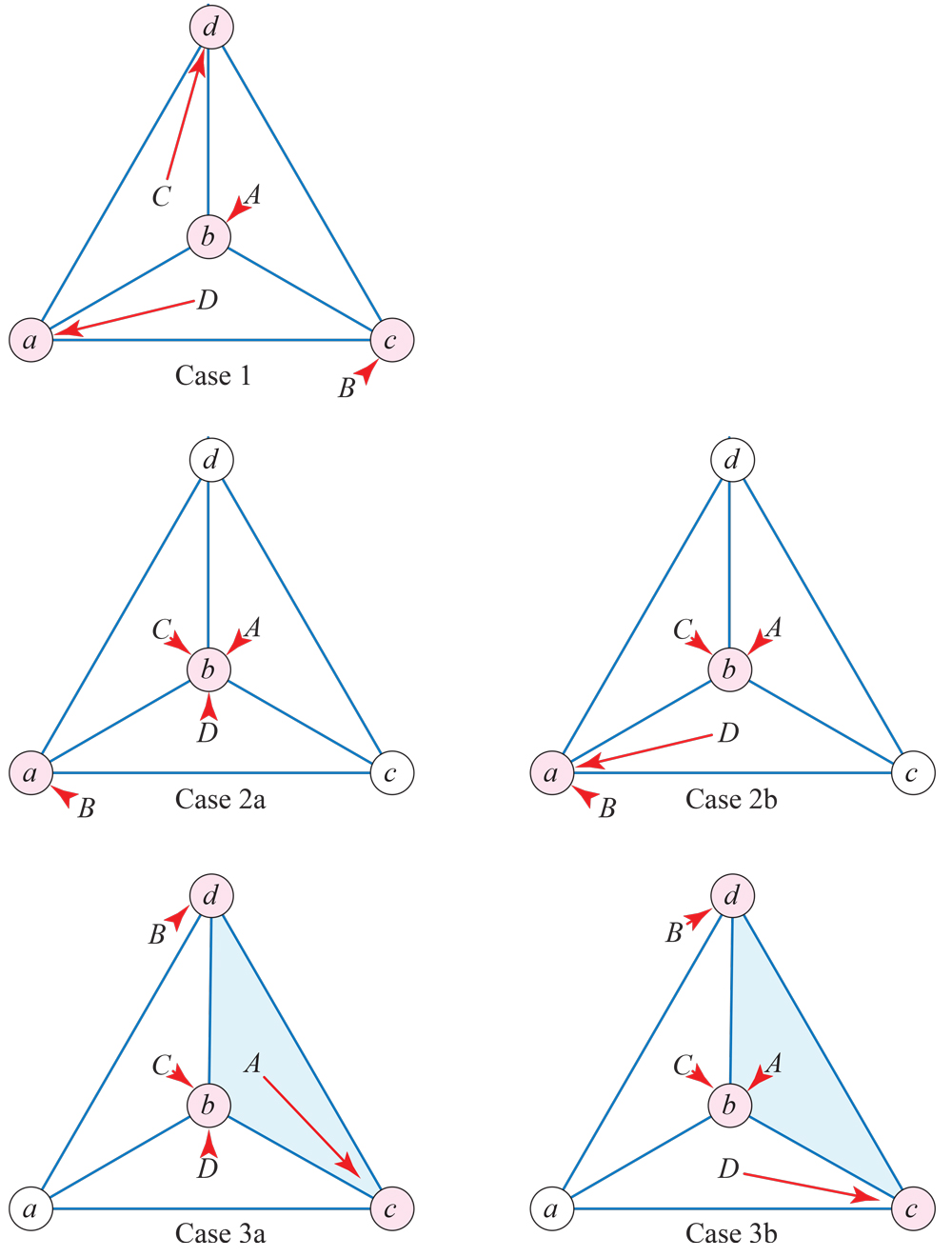}
\caption{Failures. Case~1: $4$ vertices.
Case~2: $2$ vertices. 
Case~3: $3$ vertices.}
\figlab{Cases}
\end{figure}

The proof analyzes the $12$ face angles of $T$, and shows
the set of solutions in $(0,1)^{12}$ is empty
(under the convention that each angle is in $(0,1)$).
So we are representing tetrahedra by their $12$ face angles.
The four faces each have a total of $\pi$ angle, which
reduces the dimension of the tetrahedron configuration space from $12$ to $8$.
It is known that in fact the configuration space is $5$-dimensional, not $8$-dimensional,
but the proof to follow works without including the various additional trigonometric relations
that tetrahedron angles must satisfy. It suffices to use linear equalities and inequalities
among the $12$ face angles.

\paragraph{Case 1: $4$ vertices.}
Suppose first that each of the four faces $A,B,C,D$ fail on four
distinct vertices. Then
Lemma~\lemref{curvless1} shows that
$\omega(v) < 1$ for each vertex $v$.
But then $\sum \omega(v) < 4$, contradicting the Gauss-Bonnet theorem.

\paragraph{Case 2a: $2$ vertices, $3+1$.}
Suppose now that the four faces fail on a total of two vertices.
This can occur in two distinct ways: three faces fail on one vertex,
which we call Case~2a, or two faces fail each on two vertices, Case~2b.
Say that $b$ is the vertex at which three faces fail. We then have:

\begin{eqnarray*}
A \;\mathrm{fails} \;\mathrm{at}\; b &:& bC + bD > 1 \\
B \;\mathrm{fails} \;\mathrm{at}\; a &:& aC + aD > 1 \\
C \;\mathrm{fails} \;\mathrm{at}\; b \\
D \;\mathrm{fails} \;\mathrm{at}\; b
\end{eqnarray*}
It turns out that we do not need to use the fact that
$C$ and $D$ fail at some vertices, so the implied inequalities are suppressed.
Summing the failure inequalities above leads to a contradiction:
\begin{eqnarray*}
(aC + bC) + (aD + bD) &>& 2 \\
(1 - dC) + (1 - cD) &>& 2 \\
2 &>& 2 + (dC + cD) \\
0 &>& dC + cD
\end{eqnarray*}
This is a contradiction because all angles have positive measure.

\paragraph{Case 2b: $2$ vertices, $2+2$.}
This follows the exact same proof, as again $C$ and $D$ failures are not needed
to reach a contradiction.

\paragraph{Case 3a: $3$ vertices, double outside.}
The three vertices at which faces fail bound a face, say $A$.
One vertex of $A$, say $b$, is ``doubled" in the sense that two faces fail at $b$.
Case~3a is distinguished in that neither face failing on $b$ is the three-vertex face $A$.
(Swapping $B$ to fail on $c$ and $A$ to fail of $d$
is symmetrically equivalent to the case illustrated.)

We again do not need all failures, in particular, we only need
those for faces $B$ and $D$:
\begin{eqnarray*}
A \;\mathrm{fails} \;\mathrm{at}\; c \\ 
B \;\mathrm{fails} \;\mathrm{at}\; d &:& dA + dC  > 1 \\
C \;\mathrm{fails} \;\mathrm{at}\; b  \\
D \;\mathrm{fails} \;\mathrm{at}\; b &:& bA + bC  > 1 \\
\end{eqnarray*}
\noindent
Adding these inequalities leads to the same contradiction:
\begin{eqnarray*}
(bA + dA) + (bC + dC) &>& 2 \\
(1 - cA) + (1 - aC) &>& 2 \\
0 &>& aC + cA
\end{eqnarray*}
Again a contradiction.

\paragraph{Case 3b: $3$ vertices, double inside.}
In contrast to Case~3a, in this case, one of the faces that fail on $b$ is
the three-vertex face $A$.
(Swapping $B$ to fail on $c$, $D$ to fail on $b$, and $C$ to fail on $d$,
is symmetrically equivalent.) 
This is the only difficult case, and the only case in which the
triangle inequalities guaranteed by Lemma~\lemref{trineq} are needed.

The angles of face $A$ satisfy $bA + cA + dA=1$.
Assume without loss of generality that
$bA \le cA \le dA$.
Three faces, $B,C,D$ fail at the three vertices of face $A$:
$d,b,c$ respectively.

To build intuition, we first run through the proof for specific $A$-face angles:

\begin{eqnarray*}
( bA, cA, dA ) &=& (0.1, 0.3, 0.6)\\
A \;\mathrm{fails} \;\mathrm{at}\; b \\ 
B \;\mathrm{fails} \;\mathrm{at}\; d &:& dA + dC > 1 \;:\; dC  > 0.4 \\
C \;\mathrm{fails} \;\mathrm{at}\; b &:& bA + bD > 1 \;:\; bD > 0.9 \\
D \;\mathrm{fails} \;\mathrm{at}\; c &:& cA + cB > 1 \;:\; cB  > 0.7
\end{eqnarray*}
Note $0.4 + 0.9 + 0.7 = 2$; this holds for arbitrary $A$ angles.
Now apply the triangle inequality to each of $dB, cB, dC$:
\begin{eqnarray*}
bD &<& bA + bC \;:\; bC > bD - bA \;:\; bC > 0.8 \\
cB &<& cA + cD \;:\; cD > cB -  cA \;:\;  cD > 0.4 \\
dC &<& dA + dB \;:\; dB > dC - dA \;:\;  dB > -0.2
\end{eqnarray*}
Note $0.8 + 0.4 -0.2 = 1$; this again holds for arbitrary $A$ angles.

Triangle face $D$ satisfies: $bD + cD + aD =1$.
\begin{eqnarray*}
bD &>& 0.9 \\
cD &>& 0.4 \\
bD + cD &>& 1.3 \\
bD + cD + aD &>& 1.3 \;>\; 1
\end{eqnarray*}
which contradicts $bD + cD + aD =1$.

\medskip
\begin{center}
\noindent\rule{0.5\textwidth}{0.5pt}
\end{center}
\medskip

Without specific angles assigned to $( bA, cA, dA )$, the argument
is less transparent.
Again assume that $bA \le cA \le dA$.

\begin{eqnarray*}
A \;\mathrm{fails} \;\mathrm{at}\; b \\ 
B \;\mathrm{fails} \;\mathrm{at}\; d &:& dA + dC > 1 \;:\; dC  > 1 - dA  \\
C \;\mathrm{fails} \;\mathrm{at}\; b &:& bA + bD > 1 \;:\; bD > 1 - bA \\
D \;\mathrm{fails} \;\mathrm{at}\; c &:& cA + cB > 1 \;:\; cB  > 1 - cA 
\end{eqnarray*}
Note the sum of the above three right-hand sides is $3 - (dA+bA+cA) = 2$.
\noindent
Now apply the triangle inequality to $dB, cB, dC$:
\begin{eqnarray*}
bD &<& bA + bC \;:\; bC > bD - bA \;:\; bC > 1 - 2 \cdot bA \\
cB &<& cA + cD \;:\; cD > cB -  cA \;:\;  cD > 1 - 2 \cdot cA \\
dC &<& dA + dB \;:\; dB > dC - dA \;:\;  dB > 1 - 2 \cdot dA
\end{eqnarray*}
Note the sum of the above three right-hand sides is $3 - 2(dA+bA+cA) = 1$.
\noindent
Face $D$'s angles satisfy  $bD + cD + aD =1$.
Now we reach a contradiction using the inequalities above.
\begin{eqnarray*}
bD &>& 1 - bA \\
cD &>& 1 - 2 \cdot cA \\
bD + cD &>& 2 - (bA + 2 \cdot cA )
\end{eqnarray*}
We have $(bA + 2 \cdot cA ) \le 1$ because $bA + cA + dA=1$
and $cA \le dA$.
And of course every angle is positive, so $aD > 0$.
So we have
\begin{eqnarray*}
bD + cD &>& 1\\
bD + cD + aD &>& 1
\end{eqnarray*}
which contradicts $bD + cD + aD =1$.

That the inequalities for each of the above cases
cannot be simultaneously satisfied
has been verified by Mathematica's
\texttt{FindInstance[]} function, which
uses Linear Programming over the rationals\footnote{
\url{https://mathematica.stackexchange.com/q/255494/194}}
to conclude that the set of 
solutions in $\mathbb{R}^{12}$ is empty.

Replacing the triangle inequalities with equalities when the tetrahedron
is flat (e.g., $aB = aC + aD$ instead of $aB < aC + aD$) again leads to the same contradiction.

\section{Conclusion}
\begin{thm}
\thmlab{Q3}
Every tetrahedron has at least one face $F$ whose boundary
$\partial F$ is a simple, closed quasigeodesic $Q$,
passing through the three vertices of $F$.
So $Q$ is a $3$-vertex quasigeodesic.
\end{thm}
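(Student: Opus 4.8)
The plan is to prove the contrapositive: assume every face of $T$ fails at some vertex, and derive a contradiction by a finite case analysis on the incidence pattern of the four failures. Each failure is an event of the form ``face $X$ fails at vertex $v$,'' where $v$ is one of the three vertices of $X$; so there are $4 \times 3 = 12$ possible failure events, and a putative counterexample picks one for each of the four faces. Up to the symmetry group of the tetrahedron, these configurations are classified by how many \emph{distinct} vertices are ``hit'' by the four failures, and then by the combinatorial pattern. I would organize the argument as: Case~1, four distinct vertices; Case~2, two distinct vertices (subdivided into $3{+}1$ and $2{+}2$); Case~3, three distinct vertices (subdivided by whether the ``doubled'' vertex has the three-vertex face among its two failing faces). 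One should first check that these cases are exhaustive up to relabeling vertices $a,b,c,d$ — this is a short combinatorial bookkeeping step.

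The first step handles Case~1 cheaply: if the four faces fail at four distinct vertices, then by Lemma~\lemref{curvless1} every vertex $v$ has $\o(v) < 1$, so $\sum_v \o(v) < 4$, contradicting the Gauss--Bonnet relation $\sum_v \o(v) = 4$ (i.e.\ $4\pi$). The second step handles Cases~2a, 2b, and 3a by the same device: write out the (at most two) failure inequalities actually needed — each of the form ``sum of two face angles at a vertex exceeds $1$'' — sum them, and substitute using the fact that the three angles of a face total $1$ (e.g.\ $aC + bC + dC = 1$, so $aC + bC = 1 - dC$). In each of these three cases the sum telescopes to an inequality of the shape $0 > (\text{sum of a few positive face angles})$, an immediate contradiction. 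The key is choosing which two failures to exploit so that the complementary angles pair up across a common face; I would present each such choice explicitly.

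The main obstacle is Case~3b (``double inside''): the three failing vertices bound a face $A$, and one of the two faces failing at the doubled vertex $b$ is $A$ itself. Here the naive summation does not close, and one genuinely needs the per-vertex triangle inequality of Lemma~\lemref{trineq}. The plan for this case: normalize by assuming $bA \le cA \le dA$ (so $bA + cA + dA = 1$), write the three relevant failure inequalities as lower bounds on $dC$, $bD$, $cB$ whose right-hand sides sum to $3 - (bA+cA+dA) = 2$, then apply the triangle inequality at vertices $b$, $c$, $d$ to convert these into lower bounds $bC > 1 - 2\,bA$, $cD > 1 - 2\,cA$, $dB > 1 - 2\,dA$, whose right-hand sides sum to $3 - 2 = 1$. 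Finally, add the two bounds $bD > 1 - bA$ and $cD > 1 - 2\,cA$ to get $bD + cD > 2 - (bA + 2\,cA)$; since $cA \le dA$ forces $bA + 2\,cA \le bA + cA + dA = 1$, we obtain $bD + cD > 1$, hence $bD + cD + aD > 1$, contradicting $bD + cD + aD = 1$.

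Two loose ends I would tie off. First, all the inequalities used are linear in the $12$ face-angle variables, so the emptiness of the solution set over $(0,1)^{12}$ (subject to the four ``face sums to $1$'' equalities and the degree-$3$ triangle inequalities) is a finite linear-feasibility check, which can be — and, per the cited computation, has been — confirmed by linear programming; this provides an independent certificate that no case was missed. Second, the flat-tetrahedron degenerate case replaces each strict triangle inequality $\a_1 < \a_2 + \a_3$ by the equality $\a_1 = \a_2 + \a_3$; rerunning Case~3b with equalities in place of the triangle inequalities yields $bD + cD \ge 1$ with the same downstream contradiction, and the other cases are unaffected since they never invoked Lemma~\lemref{trineq}. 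Concluding, in every case the assumption that all four faces fail is untenable, so some face $F$ fails at no vertex, meaning each vertex of $F$ has total angle $\le \pi$ on the side away from $F$; hence $\partial F$ is a simple closed quasigeodesic through the three vertices of $F$, establishing Theorem~\thmref{Q3}.
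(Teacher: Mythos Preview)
Your proposal is correct and follows the paper's own proof essentially step for step: the same case decomposition by number of distinct failure vertices, the Gauss--Bonnet dispatch of Case~1, the two-inequality summation trick for Cases~2a/2b/3a, and the triangle-inequality argument with the normalization $bA \le cA \le dA$ culminating in $bD + cD > 1$ for Case~3b. Even the closing remarks on the LP verification and the flat-case degeneration mirror the paper.
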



In Open Problem~18.13~\cite{Reshaping},
we conjecture that every convex polyhedron either has a simple closed geodesic,
or a simple closed quasigeodesic through just one vertex,
i.e., a $1$-vertex quasigeodesic.
This remains for future work.

\bibliographystyle{alpha}
\bibliography{refs}

\begin{thebibliography}{Pog49}

\bibitem[DO07]{do-gfalop-07}
Erik~D. Demaine and Joseph O'Rourke.
\newblock {\em Geometric Folding Algorithms: Linkages, Origami, Polyhedra}.
\newblock Cambridge University Press, 2007.
\newblock \url{http://www.gfalop.org}.

\bibitem[OV21]{Reshaping}
Joseph O'Rourke and Costin V\^{i}lcu.
\newblock Reshaping {C}onvex {P}olyhedra.
\newblock arXiv 2107.03153: \url{https://arxiv.org/abs/2107.03153}, July 2021.

\bibitem[Pog49]{p-qglcs-49}
Aleksei~V. Pogorelov.
\newblock Quasi-geodesic lines on a convex surface.
\newblock {\em Mat. Sb.}, 25(62):275--306, 1949.
\newblock English transl., {\em Amer. Math. Soc. Transl.} 74, 1952.

\end{thebibliography}

\end{document}